\newcommand{\Z}{\ensuremath{\mathbb{Z}}}
\newcommand{\R}{\ensuremath{\mathbb{R}}}
\newcommand{\M}{\ensuremath{{\mathcal M}}}
\renewcommand{\rho}{\varrho}
\renewcommand{\epsilon}{\varepsilon}
\newtheorem{thm}{Theorem}[section]
\newtheorem{cor}[thm]{Corollary}
\newtheorem{ex}[thm]{Illustration}
\begin{document}

\title{Bravais Colorings of $N$-fold Tilings}

\author{E.P. Bugarin}
\address{Mathematics department, Ateneo de Manila University, Loyola Heights, Quezon City 1108, Philippines}
\email{paobugs@yahoo.com}

\author{M.L.A. de las Pe\~nas}
\address{Mathematics department, Ateneo de Manila University, Loyola Heights, Quezon City 1108, Philippines}
\email{mlp@math.admu.edu.ph}

\begin{abstract}
In this work, a theory of color symmetry is presented that extends the ideas of traditional theories of color symmetry for periodic crystals to apply to non-periodic crystals. The color symmetries are associated to each of the crystalline sites and may correspond to different chemical species, various orientations of magnetic moments and colorings of a non-periodic tiling. In particular, we study the color symmetries of periodic and non-periodic structures via Bravais colorings of planar modules that emerge as the ring of integers in cyclotomic fields with class number one. Using an approach involving matrices, we arrive at necessary and sufficient conditions for determining the color symmetry groups and color fixing groups of the Bravais colorings associated with the modules $M_n = \Z[\exp(2\pi i/n)]$, and list the findings for $\M_{15} = \Z[\exp(2\pi i/15)]$ and $\M_{16} = \Z[\exp(\pi i/8)]$. In the second part of the paper, we discuss magnetic point groups of crystal and quasicrystal structures and give some examples of structures whose magnetic point group symmetries are described by Bravais colorings of planar modules.
\end{abstract}

\maketitle

\section{\bf Introduction} \label{intro}

The discovery of alloys with long-range orientational order and sharp diffraction images of non-crystallographic symmetries has initiated an intensive investigation of possible structures and physical properties of such systems. It was this amplified interest that established a new branch of solid state physics and also of discrete geometry called the theory of quasicrystals. To this day, since its discovery in 1984, quasicrystals have been studied intensively by metallurgists, physicists and mathematicians. Mathematicians in particular are increasingly intrigued by the underlying mathematical principles that govern quasicrystals, the geometry of diffraction patterns and their link to the study of the fascinating properties of non-periodic tilings. The most important manifestation of quasicrystals is their implicit long-range internal order that makes itself apparent in the diffraction patterns associated with them. The mathematics used to model such objects and to study their diffractive and their self-similar internal structures turns out to be highly interdisciplinary and includes algebraic number theory, theory of lattices, linear algebra, Fourier analysis, tiling theory, the study of self-similar structures and fractal measures, and dynamical systems. 

In this paper, a direct contribution to the theory of quasicrystals is provided from the mathematicianÕs perspective by addressing color symmetries of these structures. Color symmetries of quasicrystals continue to attract a lot of attention because not much is known about their classification. Particularly interesting are planar cases because, on the one hand, they show up in quasicrystalline $T$-phases, and, on the other hand, they are linked to the rather interesting classification of planar Bravais classes with $n$-fold symmetry (Mermin et al. 1986).

The planar Bravais classes are unique for the following 29 choices of n,
\begin{eqnarray} \label{eq:cls1}
 n=3,4,5,7,8,9,11,12,13,15,16,17,19,20, 21,24, \nonumber \\ 25,27,28,32,33,
           35,36,40,44,45,48,60,84.
\end{eqnarray}
The symmetry cases are grouped into classes with equal value of the Euler $\varphi$-function $$\varphi(n) = \#\{ 1 \leq k \leq n | \gcd(k, n) = 1\}.$$

The properties of quasicrystals are studied by looking at the structure of planar modules through their color symmetries that appear in quasicrystalline $T$-phases. This is done by studying the color symmetries associated with the cyclotomic integers $\M_n = \Z[\xi_n]$, the ring of polynomials in $\xi_n = \exp(2\pi i/n)$, a primitive $n$th root of unity for $n\in\{5, 8, 10, 12\}$, which yield the quasicrystallographic cases. 

The values of $n$ in Eq. (\ref{eq:cls1}) correspond to all cases where $\M_n = \Z[\xi_n]$ is a principal ideal domain and thus has class number one. If $n$ is odd, we have $M_n = M_{2n}$ and $\M_n$ thus has $2n$-fold symmetry. To avoid duplication of results, values of $n \equiv 2 \mod 4$ do not appear in Eq. (\ref{eq:cls1}). In this work we will study the color symmetries of $\M_n$ for all these values of $n$, which allow us to investigate aside from quasicrystallographic structures ($\varphi(n) = 4$), non-periodic structures in general. Two crystallographic cases ($\varphi(n) = 2$) are also covered, namely, the triangular lattice ($n = 3$) and the square lattice ($n = 4$).

The problem of color symmetry has been studied in much detail by mathematicians in the context of periodic crystals (Senechal 1979; Schwarzenberger 1984; Senechal 1988; Lifshitz 1997; De Las Pe–as et al. 1999; De Las Pe–as \& Felix 2007). Only a few mathematicians to date have considered the classification of color groups in the context of quasicrystals and non-periodic crystals, mostly using tools in Fourier analysis and algebraic number theory (Lifshitz 1997; Baake et al. 2002; Baake \& Grimm 2004). More recently, Bugarin et al. (2008) have introduced an approach in studying color symmetries of quasicrystals using notions on ideals. Then, Bugarin et al. (2009) have investigated perfect colorings of cyclotomic integers also using ideals and concepts in algebraic number theory. In this paper, we continue the work started by Bugarin et al. (2008 \& 2009) and this time, the color symmetries of planar modules is studied using an approach involving matrices. We explain the setting and basis for this method in the next section.

\section{\bf Bravais colorings of planar modules}

\subsection{\bf Setting}

In studying the color symmetries of $\M_n$, we consider colorings of $\M_n$ which are compatible with its underlying symmetry. A restriction to be imposed is that a color occupies a subset which is of the same Bravais type as the original set, while the other colors code the cosets. Assuming this compatibility requirement, a {\em Bravais coloring} $\mathcal{C}$ of $\M_n$ is arrived at by considering a coloring using cosets of a principal ideal $I$ of $\M_n$. Given $I$ of index $\ell$ in $M_n$, each element of $\M_n$ is assigned a color from a set of $\ell$ distinct colors. Two elements of $\M_n$ are assigned the same color if and only if they belong to the same coset of $I$. This coloring of the elements in $\M_n$ will be referred to as the {\em Bravais coloring} $\mathcal{C}$ of $\M_n$ determined by the ideal $I$.

A geometric representation of a Bravais coloring of $\M_n$ is achieved using a coloring of vertices of a discrete planar $N$-fold tiling ($N = n$ if $n$ is even, and $N = 2n$ if $n$ is odd). The tiling is periodic for $n\in\{3, 4\}$ and non-periodic for $n > 4$. For the latter, since $\M_n$ is dense on $\R^2$, only a subset of $\M_n$ is considered when coloring the discrete vertex set of an $N$-fold tiling on the plane. The entire dense module is seen as a lattice in a space of higher dimension, in particular, in dimension $\varphi(n)$.

In Figure \ref{fig1}(A), we display a Bravais coloring of $\M_5$ using the ideal $I = \langle1 - \xi\rangle$ as a vertex coloring of a 10-fold non-periodic tiling on the plane. This coloring depicts quasicrystallographic symmetries. In Figure \ref{fig1}(B), a Bravais coloring of $\M_7$ determined by the ideal $I = \langle1 - \xi - \xi^3\rangle$ is manifested as a vertex coloring of a 7-fold non-periodic tiling on the plane. 

\subsection{\bf Methodology}

There are three groups we look at in the analysis of a Bravais coloring of $\M_n$: the {\em symmetry group} $G$ of the uncolored module $\M_n$, the {\em color symmetry group} $H$ consisting of elements of $G$ which effect a permutation of the colors and the {\em color fixing group} $K$ consisting of elements of $H$ which fix the colors. If the color symmetry group corresponding to a Bravais coloring of $\M_n$ is $G$, then we say the coloring of $\M_n$ is perfect. Otherwise we have a non-perfect coloring of $\M_n$.

If $h\in H$ then $h$ determines a permutation of the colors $C$ in a Bravais coloring of $\M_n$ defined as follows: If $c_i$ is a color in $C$ and $h$ maps an element of $\M_n$ colored $c_i$ to an element colored $c_j$, let $hc_i = c_j$. This defines an action of $H$ on $C$, which induces a homomorphism $f$ from $H$ to the group of permutations of the set $C$ of colors of the elements of $\M_n$. The kernel of $f$ is $K$, a normal subgroup of $H$.

In this work, the problem of determining the groups $H$ and $K$ for a particular Bravais coloring of $\M_n$ is analyzed by considering $\M_n$ as a $\varphi(n)$ dimensional lattice $\Lambda$ in $\R^{\varphi(n)}$. The symmetry group $G$ of $\Lambda$ consists of the group of isometries in $\R^{\varphi(n)}$ that send the lattice to itself. $G$ is a $\varphi(n)$ dimensional crystallographic group which is a semi-direct product of its subgroup of translations $T(G) = \{t_y : \R^{\varphi(n)} \mapsto \R^{\varphi(n)}; t_y: x \mapsto x + y (y\in\Lambda)\} \cong \Z^{\varphi(n)}$ and its point group, $P(G) \cong D_N$, the dihedral group of order $2N$. $P(G)$ is generated by the following transformations from $\R^{\varphi(n)}$ to $\R^{\varphi(n)}$; namely the $N$-fold rotation $\phi_1$ and the reflection $\phi_2$.

The set $\{1, \xi, \xi^2, \ldots, \xi^{\varphi(n)-1}\}$ is taken to be a set of basis vectors for $\Lambda$ in $\R^{\varphi(n)}$. Relative to this set, each element in $\Lambda$ may be represented by its coordinate vector $[b_1, b_2, \ldots, b_{\varphi(n)}]^T$. The principal ideal $I$ is then viewed as a sublattice $L$ of $\Lambda$ (a subgroup of $\Lambda$ of finite index), and so is represented relative to the basis $\{1, \xi, \xi^2, \ldots, \xi^{\varphi(n)-1}\}$ of $\Lambda$ by a $\varphi(n)\times\varphi(n)$ square matrix $S$ whose columns form a generating set for $L$.

\section{\bf Results on the Color Symmetry Group $H$ and Color Fixing Group $K$} 

The assertions that follow facilitate our derivation of $H$ and $K$. In the results, relative to the basis vectors $\{1, \xi, \xi^2, \ldots, \xi^{\varphi(n)-1}\}$ of the $\varphi(n)$ dimensional lattice $\Lambda$ in $\R^{\varphi(n)}$, the sublattice $L$ of $\Lambda$ is represented by the matrix $S$, the generators $\phi_1$, $\phi_2$ of $D_N$ are represented by $\varphi(n)\times\varphi(n)$ matrices $A$ and $B$, respectively.

\subsection{\bf Determination of the color symmetry group $H$}

\begin{thm}\label{thm311}
Consider a Bravais coloring of $\M_n$ determined by an ideal $I$. Then $T(G) \leq H$  and $\phi_1 \in H$.
\end{thm}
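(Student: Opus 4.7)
The plan is to verify both assertions directly from the definition of $H$, exploiting the fact that $I$ is an \emph{ideal} of $\M_n$ (not merely an additive subgroup) and that the geometric action of $\phi_1$ on $\Lambda$ coincides, under the identification $\Lambda \cong \M_n$ via the basis $\{1,\xi,\ldots,\xi^{\varphi(n)-1}\}$, with multiplication by $\xi = \xi_n$. Recall that by construction two elements $x, x' \in \M_n$ receive the same color under $\mathcal{C}$ precisely when $x - x' \in I$, so an element $g \in G$ lies in $H$ iff it maps each coset of $I$ into a single coset.

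First I would handle $T(G) \leq H$. Pick any $t_y \in T(G)$, so $y \in \Lambda \cong \M_n$. Since $(x+y) - (x'+y) = x - x'$, translation preserves the equivalence relation defined by $I$. Hence $t_y$ permutes cosets of $I$ and therefore permutes the colors, so $t_y \in H$. This step uses only the additive subgroup structure of $I$.

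Next I would show $\phi_1 \in H$. Under the identification above, $\phi_1$ acts on $\M_n$ as multiplication by $\xi$. If $x - x' \in I$, then $\xi x - \xi x' = \xi (x - x') \in I$, because $I$ is closed under multiplication by elements of $\M_n$. Thus $\phi_1$ sends same-colored pairs to same-colored pairs, inducing a well-defined permutation of the colors, and so $\phi_1 \in H$.

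The only real content here is the observation in the second step: it is precisely the ideal property of $I$, rather than merely its additive subgroup property, that makes $\phi_1$ color-permuting. No computation with the matrices $A$, $B$, $S$ from Section~2 is needed at this stage; those will instead be indispensable for the subsequent and genuinely harder task of deciding which further elements of $P(G)$ — for instance $\phi_2$, or various products $\phi_1^k \phi_2$ — also lie in $H$, where the ideal condition no longer guarantees the conclusion automatically.
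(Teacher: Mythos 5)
Your proof is correct and follows essentially the same route as the paper's: both verify directly that translations and the $N$-fold rotation send cosets of $I$ (equivalently, of the sublattice $L$) to cosets, so that each induces a permutation of the colors. In fact your argument is slightly more complete than the paper's, since the paper simply asserts $\phi_1(x+L)=y'+L$ without noting that this rests on $\phi_1(L)=L$, which is exactly the point you justify by identifying $\phi_1$ with multiplication by the unit $\xi$ and invoking the ideal property of $I$.
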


\begin{proof}
Let $\Lambda$ be the lattice in $\R^{\varphi(n)}$ representing $\M_n$ and $L$ be the sublattice of $\Lambda$ representing the ideal $I$. Suppose $t\in T(G)$. Then if $x\in\Lambda$, $t(x) = x + y$, for some $y\in\Lambda$. This implies $t(x + L) = x + y + L\in\Lambda/L$. Hence $T(G) \leq H$. Moreover, $\phi_1(x) = y'$ where $y'\in\Lambda$ is the image of $x$ under the $N$-fold rotation $\phi_1$. Thus $\phi_1(x + L) = y' + L\in\Lambda/L$. Hence, $\phi_1\in H$.
\end{proof}

The above theorem implies that there are only two possibilities for the color symmetry group $H$, either $H = G$, that is, $H = T(G) \rtimes D_N$, where $D_N = \langle\phi_1, \phi_2\rangle$; or $H = T(G) \rtimes C_N$, where $C_N = \langle\phi_1\rangle$. This result also appears in the paper by Bugarin et al. (2008), verified by an approach using ideals. The following result determines when $\phi_2\in H$.

\begin{thm}\label{thm312}
Consider a Bravais coloring of $\M_n$ determined by $I$. Suppose $\Lambda$ is the lattice in $\R^{\varphi(n)}$ representing $\M_n$ and $L$ is the sublattice of $\Lambda$ representing $I$. Then $H = T \rtimes D_N$ if and only if $\phi_2(L)=L$.
\end{thm}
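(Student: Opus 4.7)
The plan is to leverage Theorem \ref{thm311}: it already forces $T(G) \leq H$ and $\phi_1 \in H$, so the only remaining freedom is whether the reflection $\phi_2$ is a color symmetry. Hence the statement reduces to proving the equivalence $\phi_2 \in H \iff \phi_2(L) = L$, where membership in $H$ means that $\phi_2$ induces a well-defined permutation of the cosets of $L$ in $\Lambda$ (the ``colors''). I will treat the two implications separately.

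For the easy direction ($\Leftarrow$), I assume $\phi_2(L) = L$. Since $\phi_2$ is a linear isometry, for any $x \in \Lambda$ we have $\phi_2(x + L) = \phi_2(x) + \phi_2(L) = \phi_2(x) + L$, which is a single coset of $L$ in $\Lambda$. This shows that $\phi_2$ is compatible with the partition into cosets and therefore permutes the colors, placing $\phi_2$ in $H$.

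For the converse ($\Rightarrow$), assume $\phi_2 \in H$, so $\phi_2$ sends each coset of $L$ into a single coset of $L$. In particular the coset $0 + L = L$ is mapped into one coset; since $\phi_2(0) = 0 \in L$, that coset must be $L$ itself. Hence $\phi_2(y) \in L$ for every $y \in L$, i.e.\ $\phi_2(L) \subseteq L$. Equality then follows either by noting that $\phi_2$ is an involution (apply the same argument to $\phi_2^{-1} = \phi_2$ to get the reverse inclusion), or by observing that $\phi_2$ is a linear bijection on $\R^{\varphi(n)}$, so $\phi_2(L)$ is a sublattice of $\Lambda$ of the same index as $L$, and a sublattice contained in $L$ of the same finite index in $\Lambda$ as $L$ must equal $L$.

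The only subtle point, and the place I expect to have to be most careful, is establishing that $\phi_2(L) \subseteq L$ already forces $\phi_2(L) = L$ rather than merely a proper inclusion; but the involutive nature of the reflection $\phi_2$ (equivalently, the equality of indices $[\Lambda : L] = [\Lambda : \phi_2(L)]$) makes this immediate. Everything else is a direct unpacking of what it means for an isometry to induce a permutation of the cosets, so no further machinery beyond Theorem \ref{thm311} and the coset structure is needed.
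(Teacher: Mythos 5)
Your proof is correct and follows essentially the same route as the paper: both directions reduce to the observation that $\phi_2$ permutes cosets and that $L$ is the coset containing $\phi_2(0)=0$. Your extra care in upgrading $\phi_2(L)\subseteq L$ to equality (via the involution or the index count) fills in a detail the paper's proof passes over silently, but it is the same argument.
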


\begin{proof}
If $\phi_2\in H$, then $\phi_2$ permutes the cosets of $L$ in $\Lambda$. This implies $\phi_2(L) = L$ since $0\in L$ and $\phi_2(L)$ is the coset of $L$ containing $\phi_2(0) = 0$. Conversely, suppose $\phi_2(L) = L$. Let $x + L, x\in\Lambda$, be a coset of $L$ in $\Lambda$. Then $\phi_2(x +L) = \phi_2(x) + \phi_2(L) = \phi_2(x) + L\in\Lambda/L$ since $\phi_2\in P(G)$ and leaves $\Lambda$ invariant. Thus $\phi_2\in H$ and $H = T(G) \rtimes D_N$.
\end{proof}

\begin{cor}\label{cor313}
Let $S$ be a matrix representing the sublattice $L$ corresponding to the principal ideal $I$ of $\M_n$ and $B$ a matrix representing $\phi_2$. Then $H = T(G) \rtimes D_N$ if and only if $S^{-1}BS$ is an integral matrix.
\end{cor}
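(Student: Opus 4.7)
The plan is to invoke Theorem \ref{thm312} to translate the corollary into the lattice statement $\phi_2(L)=L$, and then to interpret both directions of that equality as a statement about the matrix product $S^{-1}BS$. The set-up ensures $S$ is an invertible rational matrix (its columns form a $\Z$-basis of the full-rank sublattice $L \subseteq \Lambda$), and $B$ is an integer matrix with $\det B = \pm 1$ (since $\phi_2$ is an involutive reflection preserving $\Lambda$).

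First I would unpack what the columns of $BS$ mean geometrically. Since the chosen basis $\{1,\xi,\ldots,\xi^{\varphi(n)-1}\}$ of $\Lambda$ represents $\phi_2$ by $B$, applying $\phi_2$ to each column of $S$ yields the corresponding column of $BS$; hence the columns of $BS$ form a $\Z$-generating set for the image sublattice $\phi_2(L)$. The inclusion $\phi_2(L) \subseteq L$ is then equivalent to every column of $BS$ being an integer linear combination of the columns of $S$, i.e.\ to the existence of an integer matrix $M$ with $BS = SM$. Because $S$ is invertible, this is the same as saying $M = S^{-1}BS$ lies in $\mathrm{Mat}_{\varphi(n)}(\Z)$.

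Next I would promote the inclusion to an equality. Since $\phi_2$ is a reflection, $\phi_2^2 = \id$. Applying $\phi_2$ to $\phi_2(L) \subseteq L$ gives $L \subseteq \phi_2(L)$, so automatically $\phi_2(L) = L$. (Equivalently, if $M = S^{-1}BS$ is integral, then $\det M = \det B = \pm 1$, so $M$ is an element of $GL_{\varphi(n)}(\Z)$; thus the columns of $BS$ generate the same lattice as those of $S$.) Combining with Theorem \ref{thm312} yields the corollary in both directions.

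I do not expect a serious obstacle: the only thing to watch is the logical passage from the one-sided containment $\phi_2(L)\subseteq L$ (which is what the integrality of $S^{-1}BS$ directly encodes) to the equality $\phi_2(L)=L$ required by Theorem \ref{thm312}. The involution property of $\phi_2$, or equivalently the fact that $\det B = \pm 1$, dispatches this cleanly, so the proof reduces to the change-of-basis computation outlined above.
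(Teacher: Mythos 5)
Your proposal is correct and follows the same route as the paper, which simply invokes Theorem \ref{thm312} and asserts that invariance of $L$ under $\phi_2$ is equivalent to integrality of $S^{-1}BS$; you merely supply the change-of-basis details the paper leaves implicit. Your explicit handling of the passage from the one-sided containment $\phi_2(L)\subseteq L$ to the equality $\phi_2(L)=L$ (via the involution property, or equivalently $\det B=\pm 1$) is a welcome addition, since the paper glosses over exactly that point.
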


\begin{proof}
If $S$ is a matrix representing the sublattice $L$ relative to the basis $\{1, \xi, \xi^2, \ldots, \xi^{\varphi(n)-1}\}$, and $B$ is a matrix representing $\phi_2$, invariance of $L$ under $\phi_2$ is equivalent to the condition that $S^{-1}BS$ is an integral matrix, that is, all of its entries are integers.
\end{proof}

\begin{ex}\label{ex314}
Consider the ideal $I_1 = \langle1 - \xi_5\rangle$ in $\M_5$. The sublattice representing $I_1$ relative to the set of basis vectors $\{1, \xi_5, \xi_5^2, \xi_5^3 : \xi_5 = \exp(2\pi i/5)\}$ is given by the matrix $$S_5=\left[\begin{array}{cccc} 1 & 0 & 0 & 1 \\ -1 & 1 & 0 & 1 \\ 0 & -1 & 1 & 1 \\ 0 & 0 & -1 & 2 \end{array}\right];$$ a matrix representing the reflection $\phi_2\in D_{10}$ is $$B_5=\left[\begin{array}{cccc} 1 & -1 & 0 & 0 \\ 0 & -1 & 0 & 0 \\ 0 & -1 & 0 & 1 \\ 0 & -1 & 1 & 0 \end{array}\right].$$ Since $S_5^{-1}B_5S_5$ is an integral matrix, then $\phi_2\in H$. It follows that the Bravais coloring determined by $I_1$ in $\M_5$ is perfect, that is $H = T \rtimes D_{10}$. On the other hand, the Bravais coloring determined by the ideal $I_2 = \langle1 -\xi_7 -\xi_7^3\rangle$ in $\M_7$ is non-perfect. Note that $S_7^{-1}B_7S_7$ is non-integral, where a sublattice representing $I_2$ relative to the set of basis vectors $\{1, \xi_7, \xi_7^2, \xi_7^3, \xi_7^4, \xi_7^5, \xi_7^6 : \xi_7 = \exp(2\pi i/7)\}$ is given by $$S_7=\left[\begin{array}{cccccc} 1 & 0 & 0 & 1 & -1 & 1 \\ -1 & 1 & 0 & 1 & 0 & 0 \\ 0 & -1 & 1 & 1 & 0 & 1 \\ -1 & 0 & -1 & 2 & 0 & 1 \\ 0 & -1 & 0 & 0 & 1 & 1 \\ 0 & 0 & -1 & 1 & -1 & 2  \end{array}\right];$$ and the matrix representing $\phi_2'\in D_{14}$ is given by $$B_7=\left[\begin{array}{cccccc} 1 & -1 & 0 & 0 & 0 & 0 \\ 0 & -1 & 0 & 0 & 0 & 0 \\ 0 & -1 & 0 & 0 & 0 & 1 \\ 0 & -1 & 0 & 0 & 1 & 0 \\ 0 & -1 & 0 & 1 & 0 & 0 \\ 0 & -1 & 1 & 0 & 0 & 0  \end{array}\right].$$ For this coloring, $H = T \rtimes C_{14}$. \hfill$\Box$
\end{ex}

\subsection{\bf Determination of the color fixing group $K$}

\begin{thm}\label{thm321}
Consider a Bravais coloring of $\M_n$ determined by $I$. Suppose $\Lambda$ is the lattice in $\R^{\varphi(n)}$ representing $\M_n$ and $L$ is the sublattice of $\Lambda$ representing $I$. Then $T(I)$ is in $K$, where $T(I) = \{t_s: \R^{\varphi(n)}\mapsto\R^{\varphi(n)}; t_s: z \mapsto z + s (s\in L)\}$.  
\end{thm}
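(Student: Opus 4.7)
The plan is to argue directly from the definition of $K$ as the kernel of the color-permutation action of $H$ on the set $C$ of colors. Recall that colors are in bijection with cosets of $L$ in $\Lambda$: two elements of $\Lambda$ receive the same color precisely when they lie in the same coset $x+L$. So the goal reduces to showing that every $t_s\in T(I)$ (i) lies in $H$ to begin with, and (ii) acts as the identity permutation on the quotient $\Lambda/L$.

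For (i), I would invoke Theorem~\ref{thm311}: since $L\subseteq \Lambda$, we have $T(I)\subseteq T(G)$, and Theorem~\ref{thm311} tells us $T(G)\leq H$. Thus every $t_s$ with $s\in L$ already belongs to $H$, and it makes sense to ask whether it lies in $K$.

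For (ii), I would take an arbitrary coset $x+L\in\Lambda/L$ and compute
\[
t_s(x+L) \;=\; (x+s)+L \;=\; x+L,
\]
where the last equality uses $s\in L$. Hence $t_s$ fixes every coset of $L$ in $\Lambda$, which is precisely the statement that $t_s$ acts trivially on the color set $C$. By the definition of $K$ as the kernel of the action homomorphism $f\colon H\to \mathrm{Sym}(C)$, this gives $t_s\in K$, and therefore $T(I)\leq K$.

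There is really no main obstacle here: the only thing to be careful about is the bookkeeping between the algebraic picture (cosets of the ideal $I$ in $\M_n$) and the geometric picture (translates of the sublattice $L$ in $\Lambda$), but these are identified by construction in Section~2. The proof is thus a one-line coset computation once membership in $H$ is secured via Theorem~\ref{thm311}.
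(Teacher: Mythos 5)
Your proof is correct and follows essentially the same route as the paper: a direct coset computation showing $t_s(x+L)=x+s+L=x+L$ for $s\in L$, combined with Theorem~\ref{thm311} to place $T(I)$ inside $H$. The only difference is that the paper phrases the computation as an equivalence and thereby also records the stronger fact $K\cap T(G)=T(I)$, whereas you prove just the containment the statement asks for.
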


\begin{proof}
Let $t_y\in T(G)$, where $y\in\Lambda$. Then $t_y\in K$ if and only if for every $x\in\Lambda, t_y(x + L) = x + y + L = x + L$ if and only if $y\in L$. This implies $K \cap T(G) = T(I)$. Hence, $T(I)$ is in $K$.  
\end{proof}

A result similar to Theorem \ref{thm321} also appears in the paper by Bugarin et al. (2008), verified by an approach using ideals. It follows therefore that $K$ is of the form $T(I) \rtimes P_N$, where $P_N$ is a subgroup of $D_N$. To determine $P_N$, we verify which of the elements of $D_N$ fix the cosets of $I$ in $\M_n$. The next result provides a method in arriving at $K$.

\begin{thm}\label{thm322}
Consider a Bravais coloring of $\M_n$ determined by $I$, where $I$ is of index $\ell$ in $\M_n$. Let $S$ be the matrix representing the sublattice $L$ corresponding to the principal ideal $I$ of $\M_n$; $B$, $A$ be matrices representing respectively, $\phi_2$, $\phi_1$ and $x_i + L$ be the cosets of $L$ in $\Lambda$, for $i=1,\ldots,\ell$. Then the following are true:
\begin{enumerate}
\item[(i)] $\phi_2\in K$ if and only if $\phi_2\in H$ and $S^{-1}(Bx_i-x_i)$ is an integral matrix for all $i = 1, \ldots, \ell$; and,
\item[(ii)] $\phi_1^k\in K$, where $k$ divides $N$, if and only if $S^{-1}(A^kx_i-x_i)$ is an integral matrix for all $i = 1, \ldots, \ell$.
\end{enumerate}
\end{thm}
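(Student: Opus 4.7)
The plan is to reduce membership in $K$ to a concrete lattice-membership condition on each coset representative, and then translate that lattice-membership into the integrality condition on $S^{-1}$ applied to a suitable vector. By definition, $K$ is the kernel of the action of $H$ on the set of cosets $\Lambda/L$, so an element $g\in H$ lies in $K$ precisely when $g(x_i + L) = x_i + L$ for every $i = 1,\ldots,\ell$. Since $g$ is $\R$-linear and fixes $0$, and since $g(L)=L$ whenever $g \in H$ (cf.\ the argument in the proof of Theorem \ref{thm312}), this is equivalent to $g(x_i) - x_i \in L$ for every $i$.

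The second ingredient is the interpretation of $S$ as a change-of-basis matrix. Because the columns of $S$ form a $\Z$-basis of $L$ expressed in the basis $\{1,\xi,\ldots,\xi^{\varphi(n)-1}\}$ of $\Lambda$, a vector $v \in \Lambda$ lies in $L$ if and only if its coordinates relative to the column basis, namely $S^{-1}v$, are integers. Combining this with the previous step, $g \in K$ if and only if $S^{-1}\bigl(g(x_i) - x_i\bigr)$ is an integral vector for each $i$.

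Now each part of the theorem follows by specializing $g$. For (i), with $g = \phi_2$ represented by $B$, one must first insist that $\phi_2 \in H$; otherwise $\phi_2$ need not permute cosets of $L$ and the question of it lying in $K$ is vacuous. Under this hypothesis, the reduction above produces the condition $S^{-1}(Bx_i - x_i)$ integral for all $i$. For (ii), Theorem \ref{thm311} already gives $\phi_1 \in H$, hence $\phi_1^k \in H$ for every $k$, so the $H$-membership prerequisite is automatic; applying the reduction with $A^k$ in place of $B$ yields the integrality criterion on $S^{-1}(A^k x_i - x_i)$.

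I do not foresee a genuine obstacle — this is essentially a bookkeeping argument combining three facts: the kernel description of $K$, the column-basis interpretation of $S$, and Theorem \ref{thm311}. The one point meriting care is the asymmetry between (i) and (ii): reflections may fail to preserve $L$, so the assumption $\phi_2 \in H$ must be stated as a separate hypothesis in (i), whereas for powers of the $N$-fold rotation the corresponding hypothesis is free of charge.
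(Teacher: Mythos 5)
Your proposal is correct and follows essentially the same route as the paper's proof: reduce $\phi\in K$ to the coset-fixing condition $\phi(x_i)-x_i\in L$ and translate membership in $L$ into integrality of $S^{-1}$ applied to that vector. You spell out two points the paper leaves implicit (that $g(x_i+L)=g(x_i)+L$ because $g(L)=L$ for $g\in H$, and that the hypothesis $\phi_2\in H$ is needed in (i) but automatic in (ii) by Theorem \ref{thm311}), which is a welcome clarification but not a different argument.
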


\begin{proof}
$\phi_2\in K$ if and only if $\phi_2\in H$ and $\phi_2$ fixes the cosets of $L$ in $\Lambda$, that is, $\phi_2(x_i + L)=x_i + L, i=1,\ldots,\ell$. Now, $\phi_2$ fixes the cosets if and only if for a matrix $B$ representing $\phi_2$, $Bx_i = x_i + Sy$ for some $y\in\Lambda, i=1,\ldots,\ell$. This is equivalent to the condition that $S^{-1}(Bx_i-x_i)$ is an integral matrix. The same argument follows for (ii).
\end{proof}

To illustrate the above ideas let us recall the examples discussed in Illustration \ref{ex314}.

\begin{ex}\label{ex322}
Let us calculate $K$ for the Bravais coloring of $\M_5$ determined by $I_1 = \langle1 -\xi_5\rangle$. The determinant of $S_5$ is 5, which is also the index of $I_1$ in $\M_5$. We obtain five cosets of $I_1$ in $\M_5$. A set of matrices representing the cosets of the sublattice representing $I_1$ is given by $S_5, [1, 0, 0, 0]^T + S_5, [2, 0, 0, 0]^T + S_5, [3, 0, 0, 0]^T + S_5$ and $[4, 0, 0, 0]^T+S_5$. Since the coloring determined by $I_1$ is perfect, we check which of the elements of $D_{10}$ fix all the cosets of $I_1$. This is equivalent to verifying whether $S_5^{-1}(R^k [a, 0, 0, 0]^T-[a, 0, 0, 0]^T)$ is an integral matrix for $a\in\{0, 1, 2, 3, 4\}$ where $R^k$ is a matrix representing an element $r$ in $D_{10}$ using the basis vectors $\{1, \xi_5, \xi_5^2, \xi_5^3 : \xi_5 = \exp(2\pi i/5)\}$. We have the following computations:
\end{ex}

\subsection{\bf Further results on the values of $H$ and $K$.}

Bugarin et al. (2008) enumerated the values of $H$ and $K$ for the cases $n\in\{5, 8, 12\}$. Then, additional values are provided for the cases $n\in\{3, 4, 7, 9\}$ by Bugarin et al. (2009). To this end, we provide the calculated values for the cases $n\in\{15, 16\}$ using the methods presented above. See Tables \ref{tab1} and \ref{tab2}.

\section{\bf Magnetic point groups and Bravais colorings}

In this part of the paper, we discuss magnetic point groups, also known as anti-symmetry groups, or black-and-white groups and give examples of how a Bravais coloring of a planar module may be used to describe the type of magnetic point group a crystal or a quasicrystal possesses. In particular, we will associate magnetic point groups with Bravais colorings having two colors. 

Magnetic point groups play an important role in the description of discrete point sets in which the points are not only characterized by their spatial coordinates but also by an additional property taking one of two possible values which will be assigned color black or white, or, spin ``up'' or ``down''. A single external operation of order $2$ is introduced (denoted by $e'$), which interchanges the colors or the spins everywhere throughout the point sets.

A $d$-dimensional magnetic point group is a subgroup of $O(d) \times 1'$, where $O(d)$ is the group of $d$-dimensional rotations (including improper rotations such as mirror reflections) and $1'$ is the $1$-dimensional spin space consisting of two values $e$ and $e'$, for which $e$ is the identity and fixes the spin configuration (or colors), and $e'$ is the time inversion that interchanges the two possible values (the colors or the spins). A magnetic point group $G^*$ formed from a subgroup $S^*$ of $O(d)$ is of three types, which we enumerate as follows:

\begin{enumerate}
\item The {\em grey} group is denoted by $G^* = S^* \times 1'$, where all rotations in $S^*$ occur with and without time inversion.
\item The {\em black-and-white} group is given by $G^* = T^* + (\phi T^*)e'$  where $T^*$ is an index $2$ subgroup of $S^*$. In particular, $S^* = T^* + \phi T^*, \phi \notin T^*$. In this group, half of the rotations in $S^*$, those belonging to $\phi T^*$, occur with time inversion, and the rotations in $T^*$ do not.
\item The {\em white} group pertains to $G^* = S^*$. In this case, the magnetic point group contains rotations that are not followed by time inversion, that is, all the colors or spins are preserved.
\end{enumerate}

Any ordinary point group $P$ associated with a crystal is itself a magnetic point group of type 3 ({\em white} group). If we apply $1'$ to each of the elements of $P$, a magnetic point group of type 1 ({\em grey group}) is formed. To arrive at a magnetic point group of type 2 ({\em black-and-white} group), one has to list all distinct subgroups $L$ of index 2 in $P$. Each element of $P$ that is in $L$ is unprimed in $G^*$ and the rest of the elements in $P$ are primed in $G^*$. To denote magnetic point groups of type 2, we follow the Hermann-Maugin symbol for $P$ and add prime to those elements of $P$ not in $L$. 

\subsection{\bf Magnetic point groups of crystals}

The magnetic point group of a $d$-dimensional magnetically-ordered periodic crystal containing an equal number of black and white colors or ``up'' and ``down'' spins, is the set of rotations from $O(d) \times 1'$ that leave the crystal invariant to within a translation. The magnetic point group $G^*$ of a crystal can either be a {\em grey} group or a {\em black-and-white group}. $G^*$ is a grey group if the time inversion, by itself, leaves the crystal invariant to within a translation. If time inversion together with a translation cannot leave the crystal invariant, then $G^*$ is a black-and-white group. The following illustration makes this apparent.

In Figure \ref{fig2}(A), we present a square crystal whose magnetic point group is a {\em grey} group. Its ordinary point group is given by $4mm$. The time inversion $e'$ together with a translation leaves the crystal invariant, thus its magnetic point group is $4mm1'$.  

Figure \ref{fig2}(B) on the other hand shows an example of a hexagonal crystal whose magnetic point group is a {\em black-and-white} group. Note that applying a translation combined with $e'$ cannot recover the original crystal and thus its magnetic point group cannot be a grey group. Instead its magnetic point group is $6'mm'$, which is formed from its ordinary point group $6mm$. To recover the original crystal, $e'$ must be combined with a point group operation. The six-fold rotation with center at the center of a hexagon, together with $e'$, leaves the crystal invariant. In like manner, the reflection with horizontal axis passing through the center of a hexagon, followed by $e'$, also leaves the crystal invariant. This explains the occurence of the ``prime'' ($'$) in $6$ and the second $m$ in $6'mm'$. Observe that a reflection with vertical axis passing through a center of the hexagon leaves the crystal invariant without the time inversion. This reflection is represented by the first $m$ in $6'mm'$ and thus is unprimed in the symbol.

In terms of a Bravais coloring of a module, the magnetically-ordered square crystal given in Figure \ref{fig2}(A)  can be represented by a Bravais coloring of $\M_4$ (square lattice) determined by the principal ideal  $\langle 1 + i\rangle$. To describe the hexagonal crystal shown in Figure \ref{fig2}(B), since there is no Bravais coloring of $\M_3 = \M6$ corresponding to an ideal of index two in $\M_3$, we consider a perfect two-coloring of the faces of the tiling by equilateral triangles with a color fixing group isomorphic to the group generated by a 3-fold rotation about the center of a hexagon and two linearly independent translations. 

\subsection{\bf Magnetic point groups of quasicrystals}

The magnetic point group of a $d$-dimensional magnetically ordered quasiperiodic crystal, or simply quasicrystal, is defined as the set of rotations from $O(d) \times 1'$ that leave the crystal indistinguishable (Lifshitz 2005). This means that the original crystal as well as the rotated or reflected crystal contain the same spatial distributions of finite numbers of black or white colors of arbitrary size. The two are statistically the same though not necessarily identical (Lifshitz 1997). For the case of periodic crystals, the requirement of indistinguishability reduces to the requirement of invariance to within a translation.

An example of a quasicrystal with a {\em grey} group as its magnetic point group is shown in Figure \ref{fig3}(A). The  octagonal quasicrystal has magnetic point group $8mm1'$ formed from its ordinary point group $8mm$.  The time inversion rearranges the colors or spin clusters in the quasicrystal. A spin cluster, (denoted by one color) and its image under inversion appear in the quasicrystal with the same spatial distribution. 

Figure \ref{fig3}(B) shows a decagonal quasicrystal with a black and white group as its magnetic point group. More particularly, the quasicrystalÕs magnetic point group is $10'm'm$, formed from its ordinary point group $10mm$. In this case, the time inversion by itself, does not leave the quasicrystal indistinguishable. The inversion must be combined either with odd powers of a $10$-fold rotation, or with reflections along a vertical line. However, a reflection along a horizontal line by itself leaves the quasicrystal indistinguishable, and hence is unprimed in the symbol.

The octagonal quasicrystal shown in Figure \ref{fig3}(A) can be realized as a Bravais coloring of the module $\M_8$ determined by $\langle 1 + \xi_8\rangle$, via the vertices of the Ammann-Beenker tiling. Since there is no Bravais coloring of $\M_5$ corresponding to an ideal of index two in $\M_5$, the tiles of the decagonal lattice (in this case, a Penrose tiling) are colored to produce a two-coloring shown in Figure \ref{fig3}(B) that describes the decagonal quasicrystal with a {\em black-and-white} magnetic point group.

We end this section with the following remark.

\noindent{\bf Remark:} The magnetic point group of a $\varphi(n)$-dimensional magnetically ordered periodic or quasiperiodic crystal is a {\em grey} group if the corresponding module $\M_n$ has a Bravais coloring induced by an ideal of index two. Otherwise, the crystal has a {\em black-and-white} magnetic point group. 

The reader may refer to Baake \& Grimm (2004) for a list of the number of ideals of a given finite index in $\M_n$.

\section{\bf Summary}

In this work, we have presented a means to study the color symmetries of periodic and non-periodic structures (including quasicrystallographic structures) via Bravais colorings of planar modules that emerge as the rings of integers in cyclotomic fields with class number one. In particular, the contribution of this work is to provide a method that derives the color symmetry groups and color fixing groups associated with these modules using an approach involving matrices. The derivation centers on the elements of the module treated as points of a lattice in higher dimensional space. The method, implemented with the aid of a computer algebra system facilitates a systematic way to arrive at the results on color symmetry groups and color fixing groups of modules other than those already provided by Bugarin et al. (2008 \& 2009) using an approach involving ideals. The algorithm facilitates the calculation for the modules $M_{15}$ and $\M_{16}$ and provides a springboard to address the characterization of the color fixing group for the other symmetry cases grouped into classes of equal value of $\varphi(n)$.

This paper also provides instances where the link between Bravais colorings of planar modules to particular magnetic point groups of crystals and quasicrystals is exhibited. This connection may become more apparent if one considers a generalization of the study of magnetic point groups to involve colorings of more than two colors.

\begin{figure}
  \centering
  \subfloat[]{\includegraphics[width=0.4\textwidth]{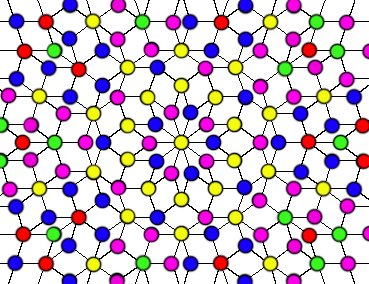}} \,\,\,\,\,                
  \subfloat[]{\includegraphics[width=0.4\textwidth]{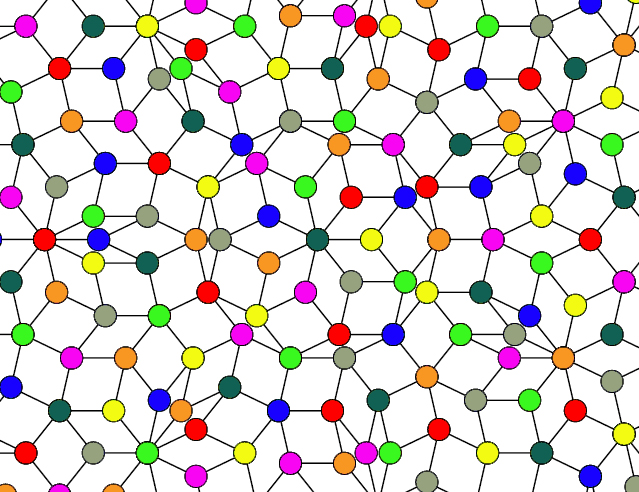}}
  \caption{Bravais colorings of (A) $\M_5$ determined by $I_1 = \langle1 -\xi_5\rangle$; and (B) $\M_7$ determined by $I_2  = \langle1 - \xi_7 - \xi_7^3\rangle$.}
  \label{fig1}
\end{figure}

\begin{figure}
  \centering
  \subfloat[]{\includegraphics[width=0.4\textwidth]{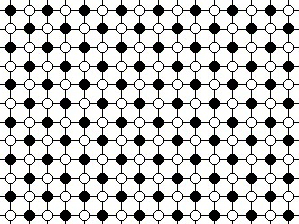}} \,\,\,\,\,                
  \subfloat[]{\includegraphics[width=0.4\textwidth]{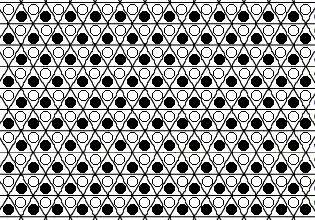}}
  \caption{Two-colorings exhibiting magnetic point group symmetries of crystals. (Adapted from Lifshitz 2005).}
  \label{fig2}
\end{figure}

\begin{figure}
  \centering
  \subfloat[]{\includegraphics[width=0.4\textwidth]{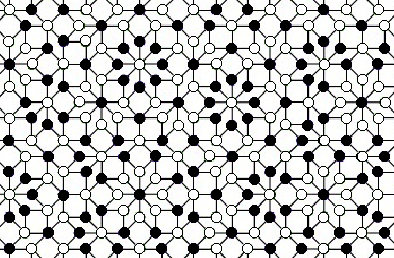}} \,\,\,\,\,                
  \subfloat[]{\includegraphics[width=0.4\textwidth]{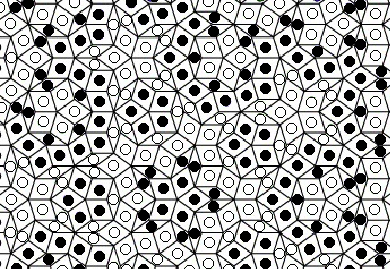}}
  \caption{Two-colorings exhibiting magnetic point group symmetries of quasicrystals. (Adapted from Lifshitz 2005).}
  \label{fig3}
\end{figure}

\begin{table}
\begin{tabular}{cccccc}
$n$ & $\ell$ & $H$ & $K$ & $I$ \\ \hline
15 & 16 & $T(G) \rtimes C_{30}$ & $T(I) \rtimes C_2$ & $1+\xi_{15}+\xi_{15}^4$ \\
 & 25 & $T(G) \rtimes D_{30}$& $T(I)\rtimes C_5$ & $1-\xi_{15}^3$ \\
 & 31 & $T(G) \rtimes C_{30}$ & $T(I)$ & $1+\xi_{15}+\xi_{15}^3$\\
 & 61 & $T(G) \rtimes C_{30}$ & $T(I)$ & $1+\xi_{15}^3+\xi_{15}^5+\xi_{15}^7$\\
 & 81 & $T(G) \rtimes D_{30}$ & $T(I)\rtimes C_3$ & $1-\xi_{15}^5$\\
 & 121 & $T(G) \rtimes C_{30}$ & $T(I)$ & $2+\xi_{15}^3+\xi_{15}^6$\\
 & 151 & $T(G) \rtimes C_{30}$ & $T(I)$ & $1-2\xi_{15}$\\
 & 181 & $T(G) \rtimes C_{30}$ & $T(I)$ & $1+\xi_{15}+\xi_{15}^3+\xi_{15}^5+\xi_{15}^7$\\
 & 211 & $T(G) \rtimes C_{30}$ & $T(I)$ & $1+\xi_{15}^2+2\xi_{15}^3$\\
 & 241 & $T(G) \rtimes C_{30}$ & $T(I)$ & $1-\xi_{15}^6+2\xi_{15}^7-\xi_{15}^9$\\
& 256' & $T(G) \rtimes D_{30}$ & $T(I) \rtimes C_2$ & $2$ \\
& 256' & $T(G) \rtimes C_{30}$ & $T(I)$ & $(1+\xi_{15}+\xi_{15}^4)^2$ \\
& $> 256$ & * & $T(I)$ & * \\ \hline  \end{tabular}
\vspace{2mm}
\caption{\label{tab1} The values of $H$ and $K$ for the case $n = 15$. Notice that there are two cases for which the Bravais coloring exhibits $256$ colors, the one being perfect and the other being non-perfect. Entries with asterisk (*) mean that there are several particular cases.}
\end{table}

\begin{table}
\begin{tabular}{cccccc}
$n$ & $\ell$ & $H$ & $K$ & $I$ \\ \hline
16 & 2  & $T(G) \rtimes D_{16}$ & $T(I)\rtimes D_{16}$ & $1-\xi_{16}$ \\
  & 4  & $T(G) \rtimes D_{16}$ & $T(I)\rtimes D_{8}$ & $1-\xi_{16}^2$ \\
 & 8  & $T(G) \rtimes D_{16}$ & $T(I)\rtimes C_{4}$ & $(1-\xi_{16})^3$ \\ 
  & 16  & $T(G) \rtimes D_{16}$ & $T(I)\rtimes C_{4}$ & $1-\xi_{16}^4$ \\
   & 17  & $T(G) \rtimes C_{16}$ & $T(I)$ & $1-\xi_{16}+\xi_{16}^3$\\
  & 32  & $T(G) \rtimes D_{16}$ & $T(I)\rtimes C_{2}$ & $(1-\xi_{16})^5$ \\
   & 34  & $T(G) \rtimes C_{16}$ & $T(I)$ & $(1-\xi_{16})(1-\xi_{16}+\xi_{16}^3)$ \\
   & 49  & $T(G) \rtimes C_{16}$ & $T(I)$ & $1-\xi_{16}-\xi_{16}^2$ \\
   & 64  & $T(G) \rtimes D_{16}$ & $T(I)\rtimes C_{2}$ & $(1-\xi_{16}^2)^3$ \\
   & 68  & $T(G) \rtimes C_{16}$ & $T(I)$ & $(1-\xi_{16}^2)(1-\xi_{16}+\xi_{16}^3)$ \\
   & 81  & $T(G) \rtimes C_{16}$ & $T(I)$ & $1+\xi_{16}^4+\xi_{16}^6$ \\
   & 97  & $T(G) \rtimes C_{16}$& $T(I)$ & $1+2\xi_{16}^3+\xi_{16}^5+\xi_{16}^7$ \\
   & 98  & $T(G) \rtimes C_{16}$ & $T(I)$ & $(1-\xi_{16})(1-\xi_{16}-\xi_{16}^2)$ \\
   & 113  & $T(G) \rtimes C_{16}$ & $T(I)$ & $2-2\xi_{16}+\xi_{16}^5$ \\
   & 128  & $T(G) \rtimes D_{16}$ & $T(I)\rtimes C_{2}$ & $(1-\xi_{16})^7$ \\
   & 136  & $T(G) \rtimes C_{16}$ & $T(I)$ & $(1-\xi_{16})^3(1-\xi_{16}+\xi_{16}^3)$ \\
   & 162  & $T(G) \rtimes C_{16}$ & $T(I)$ & $(1-\xi_{16})(1+\xi_{16}^4+\xi_{16}^6)$ \\
   & 193  & $T(G) \rtimes C_{16}$ & $T(I)$ & $1-\xi_{16}^2-\xi_{16}^3+\xi_{16}^4+\xi_{16}^7$ \\
   & 194  & $T(G) \rtimes C_{16}$ & $T(I)$ & $(1-\xi_{16})(1+2\xi_{16}^3+\xi_{16}^5+\xi_{16}^7)$ \\
   & 196  & $T(G) \rtimes C_{16}$ & $T(I)$ &$(1-\xi_{16}^2)(1-\xi_{16}-\xi_{16}^2)$ \\
   & 226  & $T(G) \rtimes C_{16}$ & $T(I)$ & $(1-\xi_{16})(2-2\xi_{16}+\xi_{16}^5)$ \\ 
   & 241  & $T(G) \rtimes C_{16}$ & $T(I)$ & $1-\xi_{16}-\xi_{16}^2+\xi_{16}^3+\xi_{16}^5$ \\
   & 256  & $T(G) \rtimes D_{16}$ & $T(I) \rtimes C_2$ & $2$ \\
   & $> 256$ & *  & $T(I)$ & * \\ \hline \end{tabular}
\vspace{2mm}
\caption{\label{tab2} The values of $H$ and $K$ for the case n = 16. Entries with asterisk (*) mean that there are several particular cases.}
\end{table}

\section*{Acknowledgement}

The authors are grateful to Rene Felix of the Institute of Mathematics, University of the Philippines for helpful discussions. Ma. Louise De Las Pe\~nas would like to acknowledge funding support of the Ateneo de Manila University through the Loyola Schools Scholarly Work Faculty Grant. Enrico Paolo Bugarin thanks the German Collaborative Research Council (CRC 701) for a support during his stay in Bielefeld University, Germany in 2009, where much of the computations in this work were done.

\end{document}